\newtheorem{theorem}{Theorem}[section]
\newtheorem{lemma}[theorem]{Lemma}
\newtheorem{definition}[theorem]{Definition}
\newtheorem{remark}[theorem]{Remark}
\numberwithin{equation}{section}
\newcommand{\R}{{\mathbb{R}}}
\newcommand{\N}{{\mathbb{N}}}
\begin{document}

\begin{abstract}
In this note, we propose coordinate-invariant notions of incremental Lyapunov function and provide characterizations of incremental stability in terms of existence of the proposed Lyapunov functions.
\end{abstract}

\title[Coordinate-invariant incremental Lyapunov functions]{Coordinate-invariant incremental Lyapunov functions}
\author[M. Zamani]{Majid Zamani$^1$} 
\author[R. Majumdar]{Rupak Majumdar$^2$} 

\address{$^1$Department of Electrical and Computer Engineering, Technische Universit\"at M\"unchen, D-80290 Munich, Germany.}
\email{zamani@tum.de}
\urladdr{http://www.hcs.ei.tum.de}

\address{$^2$Max Planck Institute for Software Systems, 67663 Kaiserslautern, Germany.}
\email{rupak@mpi-sws.org}
\urladdr{http://www.mpi-sws.org/$\sim$rupak}

\maketitle

\section{Control Systems and Stability Notions}
\subsection{Notation} 
The symbols $\mathbb{N}$, $\mathbb{R}$, $\mathbb{R}^+$ and $\mathbb{R}_0^+$ denote the set of natural, real, positive, and nonnegative real numbers, respectively. Given a vector $x\in\mathbb{R}^{n}$, we denote by $x_{i}$ the \mbox{$i$--th} element of $x$, and by $\Vert x\Vert$ the Euclidean norm of $x$; we recall that \mbox{$\Vert x\Vert=\sqrt{x_1^2+x_2^2+...+x_n^2}$}. Given a measurable function \mbox{$f:\mathbb{R}_{0}^{+}\rightarrow\mathbb{R}^n$}, the (essential) supremum of $f$ is denoted by $\Vert f\Vert_{\infty}$; we recall that \mbox{$\Vert f\Vert_{\infty}:=\text{(ess)sup}\{\Vert f(t)\Vert,t\geq0\}$}. Function $f$ is essentially bounded if $\Vert{f}\Vert_{\infty}<\infty$. For a given time $\tau\in\mathbb{R}^+$, define $f_{\tau}$ so that $f_{\tau}(t)=f(t)$, for any $t\in[0,\tau)$, and $f(t)=0$ elsewhere; $f$ is said to be locally essentially bounded if for any $\tau\in\mathbb{R}^+$, $f_{\tau}$ is essentially bounded. 
A continuous function \mbox{$\gamma:\mathbb{R}_{0}^{+}\rightarrow\mathbb{R}_{0}^{+}$}, is said to belong to class $\mathcal{K}$ if it is strictly increasing and \mbox{$\gamma(0)=0$}; $\gamma$ is said to belong to class $\mathcal{K}_{\infty}$ if \mbox{$\gamma\in\mathcal{K}$} and $\gamma(r)\rightarrow\infty$ as $r\rightarrow\infty$. A continuous function \mbox{$\beta:\mathbb{R}_{0}^{+}\times\mathbb{R}_{0}^{+}\rightarrow\mathbb{R}_{0}^{+}$} is said to belong to class $\mathcal{KL}$ if, for each fixed $s$, the map $\beta(r,s)$ belongs to class $\mathcal{K}_{\infty}$ with respect to $r$ and, for each fixed nonzero $r$, the map $\beta(r,s)$ is decreasing with respect to $s$ and $\beta(r,s)\rightarrow0$ as \mbox{$s\rightarrow\infty$}. 
A function \mbox{$\mathbf{d}:\R^n\times \R^n\rightarrow\mathbb{R}_{0}^{+}$} is a metric on $\R^n$ if for any $x,y,z\in\R^n$, the following three conditions are satisfied: i) $\mathbf{d}(x,y)=0$ if and only if $x=y$; ii) $\mathbf{d}(x,y)=\mathbf{d}(y,x)$; and iii) $\mathbf{d}(x,z)\leq\mathbf{d}(x,y)+\mathbf{d}(y,z)$. For a set $\mathcal{A}\subseteq\R^n$, and any $x\in\R^n$, $\mathbf{d}(x,\mathcal{A})$ denotes the point-to-set distance, defined by $\mathbf{d}(x,\mathcal{A})=\inf_{y\in\mathcal{A}}\mathbf{d}(x,y)$. 

\subsection{Control Systems\label{II.B}}

The class of control systems with which we deal in this note is formalized in
the following definition.
\begin{definition}
\label{Def_control_sys}A \textit{control system} is a quadruple:
\[
\Sigma=(\mathbb{R}^{n},\mathsf{U},\mathcal{U},f),
\]
where:
\begin{itemize}
\item $\mathbb{R}^{n}$ is the state space;
\item $\mathsf{U}\subseteq\mathbb{R}^{m}$ is the input set;
\item $\mathcal{U}$ is the set of all measurable functions of time from intervals of the form \mbox{$]a,b[\subseteq\mathbb{R}$} to $\mathsf{U}$ with $a<0$ and $b>0$; 
\item \mbox{$f:\mathbb{R}^{n}\times \mathsf{U}\rightarrow\mathbb{R}^{n}$} is a continuous map
satisfying the following Lipschitz assumption: for every compact set
\mbox{$Q\subset\mathbb{R}^{n}$}, there exists a constant $Z\in\mathbb{R}^+$ such that $\Vert
f(x,u)-f(y,u)\Vert\leq Z\Vert x-y\Vert$ for all $x,y\in Q$ and all $u\in \mathsf{U}$.
\end{itemize}
\end{definition}

A curve \mbox{$\xi:]a,b[\rightarrow\mathbb{R}^{n}$} is said to be a
\textit{trajectory} of $\Sigma$ if there exists $\upsilon\in\mathcal{U}$
satisfying:
\begin{equation}
\dot{\xi}(t)=f\left(\xi(t),\upsilon(t)\right),\label{eq0}
\end{equation}
for almost all $t\in$ $]a,b[$. 
We also write $\xi_{x\upsilon}(t)$ to denote the point reached at time $t$
under the input $\upsilon$ from initial condition $x=\xi_{x\upsilon}(0)$; this point is
uniquely determined, since the assumptions on $f$ ensure existence and
uniqueness of trajectories \cite{sontag1}. 
A control system $\Sigma$ is said to be forward complete if every trajectory is defined on an interval of the form $]a,\infty[$. We refer the interested readers to \cite{sontag} for sufficient and necessary conditions for a system to be forward complete. A control system $\Sigma$ is said to be smooth if $f$ is an infinitely differentiable function of its arguments.

\subsection{Stability notions}
Here, we recall the notions of incremental global asymptotic stability ($\delta_\exists$-GAS) and incremental input-to-state stability ($\delta_\exists$-ISS), presented in \cite{majid1}.

\begin{definition}[\cite{majid1}]
\label{delta_GAS1}
A control system $\Sigma$ is incrementally globally asymptotically stable ($\delta_\exists$-GAS) if it is forward complete and there exist a metric $\mathbf{d}$ and a $\mathcal{KL}$ function $\beta$ such that for any $t\in{\mathbb{R}_0^+}$, any $x,x'\in{\mathbb{R}^n}$ and any $\upsilon\in\mathcal{U}$ the following condition is satisfied:
\begin{equation}
\mathbf{d}\left(\xi_{x\upsilon}(t),\xi_{x'\upsilon}(t)\right) \leq\beta\left(\mathbf{d}\left(x,x'\right),t\right). \label{delta_GAS}%
\end{equation}
\end{definition}

As defined in~\cite{angeli}, $\delta$-GAS requires the metric $\mathbf{d}$ to be the Euclidean metric. However, Definition~\ref{delta_GAS1} only requires the existence of a metric. We note that while $\delta$-GAS is not generally invariant under changes of coordinates, $\delta_\exists$-GAS is. 

\begin{definition}[\cite{majid1}]
\label{dISS}
A control system $\Sigma$ is incrementally input-to-state stable ($\delta_\exists$-ISS) if it is forward complete and there exist a metric $\mathbf{d}$, a $\mathcal{KL}$ function $\beta$, and a $\mathcal{K}_{\infty}$ function $\gamma$ such that for any $t\in{\mathbb{R}_0^+}$, any $x,x'\in{\mathbb{R}^n}$, and any $\upsilon$, ${\upsilon}'\in\mathcal{U}$ the following condition is satisfied:
\begin{equation}
\mathbf{d}\left(\xi_{x\upsilon}(t),\xi_{x'{\upsilon}'}(t)\right) \leq\beta\left(\mathbf{d}\left(x,x'\right),t\right)+\gamma\left(\left\Vert{\upsilon}-{\upsilon}'\right\Vert_{\infty}\right). \label{delta_ISS}%
\end{equation}
\end{definition}

By observing (\ref{delta_GAS}) and (\ref{delta_ISS}), it is readily seen that $\delta_\exists$-ISS implies $\delta_\exists$-GAS while the converse is not true in general. Moreover, whenever the metric $\mathbf{d}$ is the Euclidean metric, $\delta_\exists$-ISS becomes $\delta$-ISS as defined in~\cite{angeli}. We note that while $\delta$-ISS is not generally invariant under changes of coordinates, $\delta_\exists$-ISS is.

Here, we introduce the following definition which was inspired by the notion of uniform global asymptotic stability with respect to sets in \cite{lin}.
\begin{definition}
A control system $\Sigma$ is uniformly globally asymptotically stable (U$_\exists$GAS) with respect to a set $\mathcal{A}$ if it is forward complete and there exist a metric $\mathbf{d}$, and a $\mathcal{KL}$ function $\beta$ such that for any $t\in\R_0^+$, any $x\in\R^n$ and any $\upsilon\in\mathcal{U}$ the following condition is satisfied:
\begin{equation}\label{UGAS}
\mathbf{d}(\xi_{x\upsilon}(t),\mathcal{A})\leq\beta(\mathbf{d}(x,\mathcal{A}),t).
\end{equation} 
\end{definition}
We discuss in the next section characterizations of $\delta_\exists$-GAS and $\delta_\exists$-ISS in terms of existence of incremental Lyapunov functions.

\subsection{Characterizations of incremental stability}
This section contains characterizations of $\delta_\exists$-GAS and $\delta_\exists$-ISS in terms of existence of incremental Lyapunov functions. We start by defining the new notions of $\delta_\exists$-GAS and $\delta_\exists$-ISS Lyapunov functions.

\begin{definition}
\label{delta_GAS_Lya}
Consider a control system $\Sigma=(\R^n,\mathsf{U},\mathcal{U},f)$ and a smooth function \mbox{$V:\mathbb{R}^n\times\mathbb{R}^n\rightarrow\mathbb{R}_0^+$}. Function $V$ is called a $\delta_\exists$-GAS Lyapunov function for $\Sigma$, if there exist a metric $\mathbf{d}$, $\mathcal{K}_{\infty}$ functions $\underline{\alpha}$, $\overline{\alpha}$, and $\kappa\in\mathbb{R}^+$ such that:
\begin{itemize}
\item[(i)] for any $x,x'\in\mathbb{R}^n$\\
$\underline{\alpha}(\mathbf{d}(x,x'))\leq{V}(x,x')\leq\overline{\alpha}(\mathbf{d}(x,x'))$;
\item[(ii)] for any $x,x'\in\mathbb{R}^n$ and any $u\in\mathsf{U}$\\
$\frac{\partial{V}}{\partial{x}}f(x,u)+\frac{\partial{V}}{\partial{x'}}f(x',u)\leq -\kappa V(x,x')$.
\end{itemize}
Function $V$ is called a $\delta_\exists$-ISS Lyapunov function for $\Sigma$, if there exist a metric $\mathbf{d}$, $\mathcal{K}_{\infty}$ functions $\underline{\alpha}$, $\overline{\alpha}$, $\sigma$, and $\kappa\in\mathbb{R}^+$ satisfying conditions (i) and:
\begin{itemize}
\item[(iii)] for any $x,x'\in\mathbb{R}^n$ and for any $u,u'\in\mathsf{U}$\\
\mbox{$\frac{\partial{V}}{\partial{x}}f(x,u)+\frac{\partial{V}}{\partial{x'}}f(x',u')\leq -\kappa V(x,x')+\sigma(\Vert{u}-u'\Vert)$}.
\end{itemize}
\end{definition}

\begin{remark}
Condition (iii) of Definition \ref{delta_GAS_Lya} can be replaced by: 
\begin{equation}
\frac{\partial{V}}{\partial{x}}f(x,u)+\frac{\partial{V}}{\partial{x'}}f(x',u')\leq -\rho(\mathbf{d}(x,x'))+\sigma(\Vert{u}-u'\Vert), \nonumber 
\end{equation}
where $\rho$ is a $\mathcal{K}_{\infty}$ function. It is known that there is no loss of generality in considering $ \rho(\mathbf{d}(x,x'))=\kappa V(x,y)$, by appropriately modifying the $\delta_\exists$-ISS Lyapunov function $V$ (see Lemma 11 in \cite{praly}).
\end{remark}

While $\delta$-GAS and $\delta$-ISS Lyapunov functions, as defined in \cite{angeli}, require the metric $\mathbf{d}$ in condition (i) in Definition \ref{delta_GAS_Lya} to be the Euclidean metric, Definition \ref{delta_GAS_Lya} only requires the existence of a metric. We note that while $\delta$-GAS and $\delta$-ISS Lyapunov functions are not invariant under changes of coordinates in general, $\delta_\exists$-GAS and $\delta_\exists$-ISS Lyapunov functions are. 

We now introduce the following definition which was inspired by the notion of uniform global asymptotic stability (UGAS) Lyapunov function in \cite{lin}.
\begin{definition}\label{UGAS_Lya}
Consider a control system $\Sigma$, a set $\mathcal{A}$, and a smooth function $V:\R^n\rightarrow\R_0^+$. Function $V$ is called a U$_\exists$GAS Lyapunov function, with respect to $\mathcal{A}$, for $\Sigma$, if there exist a metric $\mathbf{d}$, $\mathcal{K}_\infty$ functions $\underline\alpha$, $\overline\alpha$, and $\kappa\in\R^+$ such that:
\begin{itemize}
\item[(i)] for any $x\in\R^n$\\
$\underline{\alpha}(\mathbf{d}(x,\mathcal{A}))\leq{V}(x)\leq\overline{\alpha}(\mathbf{d}(x,\mathcal{A}))$;
\item[(ii)] for any $x\in\R^n$ and any $u\in\mathsf{U}$\\
$\frac{\partial{V}}{\partial{x}}f(x,u)\leq -\kappa V(x)$.
\end{itemize}
\end{definition}

The following theorem characterizes U$_\exists$GAS in terms of existence of a U$_\exists$GAS Lyapunov function.
\begin{theorem}\label{theorem1}
Consider a control system $\Sigma$ and a set $\mathcal{A}$. If $\mathsf{U}$ is compact and $\mathbf{d}$ is a metric such that the function $\psi(x)=\mathbf{d}(x,y)$ is continuous\footnote{Here, continuity is understood with respect to the Euclidean metric.} for any $y\in\R^n$ then the following statements are equivalent: 
\begin{itemize}
\item[(1)] $\Sigma$ is forward complete and there exists a U$_\exists$GAS Lyapunov function with respect to $\mathcal{A}$, equipped with the metric ${\mathbf{d}}$.
\item[(2)] $\Sigma$ is U$_\exists$GAS with respect to $\mathcal{A}$, equipped with the metric $\mathbf{d}$.
\end{itemize}
\end{theorem}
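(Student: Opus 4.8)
The plan is to establish the two implications separately. $(1)\Rightarrow(2)$ is the direct one and rests on the comparison principle; $(2)\Rightarrow(1)$ is a converse Lyapunov theorem, and that is where essentially all the work lies.

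\emph{$(1)\Rightarrow(2)$.} Let $V$ be a U$_\exists$GAS Lyapunov function with respect to $\mathcal{A}$ equipped with $\mathbf{d}$, with associated $\underline{\alpha},\overline{\alpha}\in\mathcal{K}_\infty$ and $\kappa\in\R^+$. Fix $x\in\R^n$ and $\upsilon\in\mathcal{U}$; forward completeness gives that $\xi_{x\upsilon}(\cdot)$ is defined and absolutely continuous on $[0,\infty)$, so $t\mapsto V(\xi_{x\upsilon}(t))$ is absolutely continuous and, by (ii), satisfies $\frac{d}{dt}V(\xi_{x\upsilon}(t))\leq-\kappa V(\xi_{x\upsilon}(t))$ for almost every $t\geq0$. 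The comparison lemma yields $V(\xi_{x\upsilon}(t))\leq e^{-\kappa t}V(x)$, and combining with (i) gives $\underline{\alpha}(\mathbf{d}(\xi_{x\upsilon}(t),\mathcal{A}))\leq e^{-\kappa t}\overline{\alpha}(\mathbf{d}(x,\mathcal{A}))$. Hence (\ref{UGAS}) holds with $\beta(r,t):=\underline{\alpha}^{-1}\!\left(e^{-\kappa t}\overline{\alpha}(r)\right)$, and one checks directly that $\beta\in\mathcal{KL}$; together with forward completeness this is $(2)$.

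\emph{$(2)\Rightarrow(1)$.} Here I would adapt, to the abstract metric $\mathbf{d}$, the converse Lyapunov construction of Lin, Sontag and Wang \cite{lin}. First apply Sontag's lemma on $\mathcal{KL}$ estimates to the $\beta$ from $(2)$: there are $\theta_1,\theta_2\in\mathcal{K}_\infty$ with $\beta(r,t)\leq\theta_1(\theta_2(r)e^{-t})$ for all $r,t\geq0$. Put $q:=\theta_1^{-1}\in\mathcal{K}_\infty$ and define the candidate
\[
W(x):=\sup_{\upsilon\in\mathcal{U}}\ \sup_{t\geq0}\ q\!\left(\mathbf{d}(\xi_{x\upsilon}(t),\mathcal{A})\right)e^{t/2}.
\]
From the $\mathcal{KL}$ estimate, $q(\mathbf{d}(\xi_{x\upsilon}(t),\mathcal{A}))\leq\theta_2(\mathbf{d}(x,\mathcal{A}))e^{-t}$, so $W(x)\leq\theta_2(\mathbf{d}(x,\mathcal{A}))$, while taking $t=0$ gives $W(x)\geq q(\mathbf{d}(x,\mathcal{A}))$; thus $W$ is finite and obeys a bound of the form (i). Using uniqueness of trajectories and concatenation of inputs (the trajectory from $\xi_{x\upsilon}(h)$ under $\upsilon'$ is the tail of the trajectory from $x$ under the concatenated input), one obtains $W(\xi_{x\upsilon}(h))\leq e^{-h/2}W(x)$ for all $h\geq0$ and $\upsilon\in\mathcal{U}$, which encodes the desired decrease of $W$ in a Dini-derivative sense (and, if $W$ were $C^1$, would give (ii) with $\kappa=1/2$).

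The main obstacle is turning $W$ into a genuine \emph{smooth} Lyapunov function. This splits into two issues. First, continuity of $W$: compactness of $\mathsf{U}$ gives continuity of $x\mapsto\xi_{x\upsilon}(t)$ uniformly in $\upsilon\in\mathcal{U}$ and in $t$ on compact intervals, while the hypothesis on $\mathbf{d}$ ensures (via $|\mathbf{d}(x,\mathcal{A})-\mathbf{d}(x',\mathcal{A})|\leq\mathbf{d}(x,x')$ together with $\mathbf{d}(x_n,x)\to0$ whenever $x_n\to x$ in the Euclidean metric) that $x\mapsto\mathbf{d}(x,\mathcal{A})$ is continuous; these combine to make $W$ continuous. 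Second, smoothing: one replaces $W$ by a locally Lipschitz function with the same sandwich and decrease properties and then mollifies, using the uniform decrease $W(\xi_{x\upsilon}(h))\leq e^{-h/2}W(x)$ to keep the perturbation terms under control, arriving at a smooth $\widehat{V}$ with $\frac{\partial\widehat{V}}{\partial x}f(x,u)\leq-\kappa\widehat{V}(x)$ for all $u\in\mathsf{U}$, some $\kappa\in\R^+$, and a fresh pair $\underline{\alpha},\overline{\alpha}\in\mathcal{K}_\infty$ in (i); forward completeness is inherited from $(2)$. I expect this smoothing step, together with the bookkeeping required to carry $\mathbf{d}$ through the Lin--Sontag--Wang argument in place of the Euclidean distance, to be the technically delicate part of the proof.
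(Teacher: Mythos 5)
Your $(1)\Rightarrow(2)$ argument is the standard comparison-lemma route and is correct. For $(2)\Rightarrow(1)$, you set out to re-derive a converse Lyapunov theorem by adapting Lin--Sontag--Wang, whereas the paper reduces to a citation. After proving, just as you do, that $\phi(x)=\mathbf{d}(x,\mathcal{A})$ is continuous with respect to the Euclidean metric (you use the cleaner inequality $|\mathbf{d}(x,\mathcal{A})-\mathbf{d}(x',\mathcal{A})|\le\mathbf{d}(x,x')$, while the paper runs a $\liminf/\limsup$ sandwich from the triangle inequality), the paper applies Theorem~1 of Teel and Praly with the choice $\omega_1=\omega_2=\mathbf{d}(\cdot,\mathcal{A})$. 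That theorem is precisely a converse Lyapunov result for a $\mathcal{KL}$ estimate expressed through two continuous, merely positive-semidefinite, measuring functions, so it already packages the entire construction you sketch --- the $\sup$ candidate $W$, the continuity argument, and the smoothing. The step you flag as ``technically delicate'' is where your sketch genuinely falls short: $\mathbf{d}(\cdot,\mathcal{A})$ is only known to be continuous, not locally Lipschitz, in the Euclidean sense, so the Lipschitzization-then-mollification scheme of Lin--Sontag--Wang does not transfer verbatim, and making it work under this weaker regularity is exactly the content of the Teel--Praly generalization. As written, your plan for $(2)\Rightarrow(1)$ is sound in outline but incomplete; completing it amounts to re-proving the cited theorem, and the citation-based route is both shorter and better matched to the hypotheses actually available.
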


\begin{proof}
First we show that the function $\phi(x)=\mathbf{d}(x,\mathcal{A})$ is a continuous function with respect to the Euclidean metric. Assume $\{x_n\}_{n=1}^\infty$ is a converging sequence in $\R^n$ with respect to the Euclidean metric, implying: $x_n\rightarrow x^*$ as $n\rightarrow\infty$ for some $x^*\in\R^n$. By triangle inequality, we have:
\begin{equation}\label{ineq1}
\mathbf{d}\left(x^*,y\right)\leq\mathbf{d}\left(x^*,x_n\right)+\mathbf{d}\left(y,x_n\right),
\end{equation}
for any $n\in\N$ and $y\in\mathcal{A}$. Using inequality (\ref{ineq1}), we obtain:
\begin{eqnarray}
\label{ineq2}
\phi\left(x^*\right)=\inf_{y\in\mathcal{A}}\mathbf{d}\left(x^*,y\right)&\leq&\inf_{y\in\mathcal{A}}\left\{\mathbf{d}\left(x^*,x_n\right)+\mathbf{d}\left(y,x_n\right)\right\}\\\notag&=&\inf_{y\in\mathcal{A}}\mathbf{d}\left(y,x_n\right)+\mathbf{d}\left(x^*,x_n\right)\\\notag&=&\phi\left(x_n\right)+\mathbf{d}\left(x^*,x_n\right).
\end{eqnarray}
Using inequality (\ref{ineq2}) and the continuity assumption on $\mathbf{d}$, we obtain:
\begin{equation}\label{ineq3}
\phi\left(x^*\right)\leq\lim_{n\rightarrow\infty}\inf\phi\left(x_n\right),
\end{equation}
for any $n\in\N$, where limit inferior exists because of greatest lower bound property of real numbers \cite{radulescu}. By doing the same analysis, we have:
\begin{equation}\label{ineq4}
\phi\left(x^*\right)\geq\lim_{n\rightarrow\infty}\sup\phi\left(x_n\right),
\end{equation}
for any $n\in\N$. Using inequalities (\ref{ineq3}) and (\ref{ineq4}), we obtain:
\begin{equation}
\phi\left(x^*\right)=\lim_{n\rightarrow\infty}\phi\left(x_n\right),
\end{equation}
implying that $\phi$ is a continuous function. Since $\phi(x)=\mathbf{d}(x,\mathcal{A})$ is a continuous function, by choosing \mbox{$\omega_1(x)=\omega_2(x)=\mathbf{d}(x,\mathcal{A})$} and using Theorem 1 in \cite{teel}, the proof completes.
\end{proof}

Before showing the main results, we need the following technical lemma, inspired by Lemma 2.3 in \cite{angeli}.

\begin{lemma}\label{lemma1}
Consider a control system $\Sigma=(\R^n,\mathsf{U},\mathcal{U},f)$. If $\Sigma$ is $\delta_\exists$-GAS, then the control system $\widehat\Sigma=(\R^{2n},\mathsf{U},\mathcal{U},\widehat{f})$, where $\widehat{f}(\zeta,\upsilon)=\left[f(\xi_1,\upsilon)^T,f(\xi_2,\upsilon)^T\right]^T$, and $\zeta=\left[\xi_1^T,\xi_2^T\right]^T$, is U$_\exists$GAS with respect to the diagonal set $\Delta$, defined by:
\begin{equation}
\Delta=\left\{z\in\R^{2n}\\|\\\exists x\in\R^n:z=\left[x^T,x^T\right]^T\right\}.
\end{equation}
\end{lemma}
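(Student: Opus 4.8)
The plan is to transfer the $\delta_\exists$-GAS estimate for $\Sigma$ directly into a U$_\exists$GAS estimate for $\widehat\Sigma$ by equipping $\R^{2n}$ with a suitable \emph{product metric}. Let $\mathbf{d}$ and $\beta\in\mathcal{KL}$ be the metric and comparison function witnessing $\delta_\exists$-GAS of $\Sigma$, so that $\mathbf{d}(\xi_{x\upsilon}(t),\xi_{x'\upsilon}(t))\leq\beta(\mathbf{d}(x,x'),t)$ for all $t\in\R_0^+$, $x,x'\in\R^n$, $\upsilon\in\mathcal{U}$. On $\R^{2n}$ I would define $\widehat{\mathbf{d}}(z,z'):=\mathbf{d}(\xi_1,\xi_1')+\mathbf{d}(\xi_2,\xi_2')$ for $z=[\xi_1^T,\xi_2^T]^T$ and $z'=[\xi_1'^T,\xi_2'^T]^T$, and first check that $\widehat{\mathbf{d}}$ satisfies the three metric axioms; this is immediate, since a coordinate-block sum of metrics is again a metric.

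The second step is the key computation: the point-to-set distance from $z=[\xi_1^T,\xi_2^T]^T$ to $\Delta$ equals $\mathbf{d}(\xi_1,\xi_2)$. Indeed, for every $x\in\R^n$ the triangle inequality and symmetry of $\mathbf{d}$ give $\mathbf{d}(\xi_1,x)+\mathbf{d}(\xi_2,x)\geq\mathbf{d}(\xi_1,\xi_2)$, and the choice $x=\xi_1$ attains this bound, so $\widehat{\mathbf{d}}(z,\Delta)=\inf_{x\in\R^n}\big(\mathbf{d}(\xi_1,x)+\mathbf{d}(\xi_2,x)\big)=\mathbf{d}(\xi_1,\xi_2)$; in particular the infimum is a minimum. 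This identity is exactly what makes the product metric the natural choice.

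The third step observes that, because $\widehat f$ acts blockwise with a common input, trajectories of $\widehat\Sigma$ decompose: for $z=[x^T,x'^T]^T$ and $\upsilon\in\mathcal{U}$ one has $\widehat\xi_{z\upsilon}(t)=[\xi_{x\upsilon}(t)^T,\xi_{x'\upsilon}(t)^T]^T$. Forward completeness of $\widehat\Sigma$ then follows from forward completeness of $\Sigma$, which is part of the hypothesis $\delta_\exists$-GAS. Combining the trajectory decomposition, the distance identity, and the $\delta_\exists$-GAS bound yields $\widehat{\mathbf{d}}(\widehat\xi_{z\upsilon}(t),\Delta)=\mathbf{d}(\xi_{x\upsilon}(t),\xi_{x'\upsilon}(t))\leq\beta(\mathbf{d}(x,x'),t)=\beta(\widehat{\mathbf{d}}(z,\Delta),t)$, so $\widehat\Sigma$ is U$_\exists$GAS with respect to $\Delta$ with the metric $\widehat{\mathbf{d}}$ and the same $\mathcal{KL}$ function $\widehat\beta=\beta$.

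I do not expect a serious obstacle here; once the product metric is in place the argument is bookkeeping. The only point needing a little care is the distance computation $\widehat{\mathbf{d}}(z,\Delta)=\mathbf{d}(\xi_1,\xi_2)$ — the lower bound via the triangle inequality and the upper bound via the explicit diagonal point $[\xi_1^T,\xi_1^T]^T$. It is also worth noting that, unlike Theorem~\ref{theorem1}, this lemma requires neither compactness of $\mathsf{U}$ nor continuity of $\mathbf{d}$.
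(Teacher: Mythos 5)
Your proposal is correct and follows essentially the same route as the paper: you define the same product metric $\widehat{\mathbf{d}}$, establish the identity $\widehat{\mathbf{d}}(z,\Delta)=\mathbf{d}(\xi_1,\xi_2)$ by the same two-sided argument (triangle inequality for the lower bound, a diagonal point for the upper bound), and then transfer the $\delta_\exists$-GAS estimate blockwise. Your closing observation that neither compactness of $\mathsf{U}$ nor continuity of $\mathbf{d}$ is needed here is also consistent with how the paper states and uses the lemma.
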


\begin{proof}
Since $\Sigma$ is $\delta_\exists$-GAS, there exists a metric $\mathbf{d}:\R^n\times\R^n\rightarrow\R_0^+$ such that property (\ref{delta_GAS}) is satisfied. Now we define a new metric $\widehat{\mathbf{d}}:\R^{2n}\times\R^{2n}\rightarrow\R^+_0$ by: 
\begin{equation}\label{metric}
\widehat{\mathbf{d}}(z,z')=\mathbf{d}(x_1,x'_1)+\mathbf{d}(x_2,x'_2),
\end{equation}
for any $z=\left[{x_1}^T,{x_2}^T\right]^T\in\R^{2n}$ and $z'=\left[{x'_1}^T,{x'_2}^T\right]^T\in\R^{2n}$. It can be readily checked that $\widehat{\mathbf{d}}$ satisfies all three conditions of a metric. Now we need to show that $\widehat{\mathbf{d}}(z,\Delta)$, for any $z=\left[x_1^T,x_2^T\right]^T\in\R^{2n}$, is proportional to $\mathbf{d}(x_1,x_2)$. We have:
\begin{eqnarray}
\label{ineq5}
\widehat{\mathbf{d}}(z,\Delta)&=&\inf_{z'\in\Delta}\widehat{\mathbf{d}}(z,z')=\inf_{x'\in\R^n}\widehat{\mathbf{d}}\left(\left[{\begin{array}{c}x_1\\x_2\\\end{array}}\right],\left[{\begin{array}{c}x'\\x'\\\end{array}}\right]\right)\\\notag&=&\inf_{x'\in\R^n}\left(\mathbf{d}(x_1,x')+\mathbf{d}(x_2,x')\right)\leq\mathbf{d}(x_1,x_2).
\end{eqnarray}
Since $\mathbf{d}$ is a metric, by using the triangle inequality, we have: $\mathbf{d}(x_1,x_2)\leq\mathbf{d}(x_1,x')+\mathbf{d}(x_2,x')$ for any $x'\in\R^n$, implying that $\mathbf{d}(x_1,x_2)\leq\widehat{\mathbf{d}}(z,\Delta)$. Hence, using (\ref{ineq5}), one obtains:
\begin{equation}\label{equality}
\mathbf{d}(x_1,x_2)\leq\widehat{\mathbf{d}}(z,\Delta)\leq\mathbf{d}(x_1,x_2)\Rightarrow\mathbf{d}(x_1,x_2)=\widehat{\mathbf{d}}(z,\Delta).
\end{equation} 
Using equality (\ref{equality}) and property (\ref{delta_GAS}), we have:
\begin{eqnarray}
\widehat{\mathbf{d}}\left(\zeta_{z\upsilon}(t),\Delta\right)&=&\mathbf{d}\left(\xi_{x_1\upsilon}(t),\xi_{x_2\upsilon}(t)\right)\\\notag&\leq&\beta\left(\mathbf{d}\left(x_1,x_2\right),t\right)=\beta\left(\widehat{\mathbf{d}}\left(z,\Delta\right),t\right),
\end{eqnarray}
for any $t\in\R_0^+$, and $\upsilon\in\mathcal{U}$, where $\zeta_{z\upsilon}=\left[\xi_{x_1\upsilon}^T,\xi_{x_2\upsilon}^T\right]^T$, and $z=\left[x_1^T,x_2^T\right]^T$. Hence, $\widehat\Sigma$ is U$_\exists$GAS with respect to $\Delta$.
\end{proof}

We can now state one of the main results, providing characterization of $\delta_\exists$-GAS in terms of existence of a $\delta_\exists$-GAS Lyapunov function.
\begin{theorem}
Consider a control system $\Sigma$. If $\mathsf{U}$ is compact and $\mathbf{d}$ is a metric such that the function $\psi(x)=\mathbf{d}(x,y)$ is continuous\footnote{Here, continuity is understood with respect to the Euclidean metic.} for any $y\in\R^n$ then the following statements are equivalent: 
\begin{itemize}
\item[(1)] $\Sigma$ is forward complete and there exists a $\delta_\exists$-GAS Lyapunov function, equipped with the metric ${\mathbf{d}}$.
\item[(2)] $\Sigma$ is $\delta_\exists$-GAS, equipped with the metric $\mathbf{d}$.
\end{itemize} 
\end{theorem}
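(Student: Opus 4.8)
The plan is to reduce the statement to Theorem~\ref{theorem1} applied to the doubled system $\widehat\Sigma=(\R^{2n},\mathsf{U},\mathcal{U},\widehat f)$ of Lemma~\ref{lemma1} together with the diagonal set $\Delta$. The bridge is a one-to-one correspondence between $\delta_\exists$-GAS Lyapunov functions for $\Sigma$ and U$_\exists$GAS Lyapunov functions for $\widehat\Sigma$ with respect to $\Delta$.

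First I would collect the book-keeping facts needed to invoke Theorem~\ref{theorem1} for $\widehat\Sigma$. The input set of $\widehat\Sigma$ is the same compact $\mathsf{U}$; $\widehat\Sigma$ is forward complete whenever $\Sigma$ is, since its flow is two decoupled copies of the flow of $\Sigma$; and the metric $\widehat{\mathbf{d}}$ defined in (\ref{metric}) inherits the continuity hypothesis of Theorem~\ref{theorem1}, because $\widehat{\mathbf{d}}(z,z')=\mathbf{d}(x_1,x_1')+\mathbf{d}(x_2,x_2')$ and each summand is continuous in $z$ with respect to the Euclidean metric by the hypothesis on $\mathbf{d}$. Hence Theorem~\ref{theorem1} applies to $\widehat\Sigma$ with $\mathcal{A}=\Delta$ and metric $\widehat{\mathbf{d}}$.

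Next comes the correspondence. Given a smooth $V:\R^n\times\R^n\to\R_0^+$, set $\widehat V(z)=V(x_1,x_2)$ for $z=[x_1^T,x_2^T]^T$; this is smooth, being $V$ composed with a linear map, and $V$ is recovered from $\widehat V$ by the same formula. By equality (\ref{equality}) we have $\widehat{\mathbf{d}}(z,\Delta)=\mathbf{d}(x_1,x_2)$, so condition (i) of Definition~\ref{delta_GAS_Lya} for $V$ with metric $\mathbf{d}$ is literally condition (i) of Definition~\ref{UGAS_Lya} for $\widehat V$ with respect to $\Delta$ with metric $\widehat{\mathbf{d}}$, with the same $\underline\alpha,\overline\alpha$. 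Moreover, by the chain rule and the block structure of $\widehat f$,
\[
\frac{\partial\widehat V}{\partial z}\,\widehat f(z,u)=\frac{\partial V}{\partial x}f(x,u)+\frac{\partial V}{\partial x'}f(x',u),
\]
with $x=x_1$ and $x'=x_2$, so condition (ii) of Definition~\ref{delta_GAS_Lya} for $V$ coincides with condition (ii) of Definition~\ref{UGAS_Lya} for $\widehat V$, with the same $\kappa$. Therefore $V$ is a $\delta_\exists$-GAS Lyapunov function for $\Sigma$ equipped with $\mathbf{d}$ if and only if $\widehat V$ is a U$_\exists$GAS Lyapunov function for $\widehat\Sigma$ with respect to $\Delta$ equipped with $\widehat{\mathbf{d}}$.

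Finally I would close the loop. For $(1)\Rightarrow(2)$: forward completeness of $\Sigma$ gives forward completeness of $\widehat\Sigma$, and the given $\delta_\exists$-GAS Lyapunov function produces, via the correspondence, a U$_\exists$GAS Lyapunov function for $\widehat\Sigma$ with respect to $\Delta$; Theorem~\ref{theorem1} then yields that $\widehat\Sigma$ is U$_\exists$GAS with respect to $\Delta$ equipped with $\widehat{\mathbf{d}}$, and rewriting the resulting estimate through (\ref{equality}) exactly as in the last display of the proof of Lemma~\ref{lemma1} gives (\ref{delta_GAS}), i.e. $\Sigma$ is $\delta_\exists$-GAS equipped with $\mathbf{d}$. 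For $(2)\Rightarrow(1)$: by Lemma~\ref{lemma1}, $\widehat\Sigma$ is U$_\exists$GAS with respect to $\Delta$ equipped with $\widehat{\mathbf{d}}$ (and forward complete), Theorem~\ref{theorem1} supplies a U$_\exists$GAS Lyapunov function for $\widehat\Sigma$ with respect to $\Delta$, and the correspondence turns it into a $\delta_\exists$-GAS Lyapunov function for $\Sigma$ equipped with $\mathbf{d}$; together with forward completeness of $\Sigma$, which is part of the definition of $\delta_\exists$-GAS, this is statement (1). The only genuine check in the whole argument is the transfer of the continuity hypothesis to $\widehat{\mathbf{d}}$ and the clean translation of the Lyapunov inequalities under the doubling map; all the analytic weight sits in Theorem~\ref{theorem1} and Lemma~\ref{lemma1}, so I do not anticipate an obstacle beyond this bookkeeping.
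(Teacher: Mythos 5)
Your (2)$\Rightarrow$(1) argument is essentially the paper's: double the system via Lemma~\ref{lemma1}, check that $\widehat{\mathbf{d}}$ inherits the continuity hypothesis, apply Theorem~\ref{theorem1} to produce a U$_\exists$GAS Lyapunov function for $\widehat\Sigma$ with respect to $\Delta$, and then use the identity $\widehat{\mathbf{d}}(z,\Delta)=\mathbf{d}(x_1,x_2)$ together with the block structure of $\widehat f$ to read off conditions (i) and (ii) of Definition~\ref{delta_GAS_Lya}; you simply spell out the ``special form'' translation as an explicit correspondence $V\leftrightarrow\widehat V$, which the paper leaves implicit. The one genuine divergence is the direction (1)$\Rightarrow$(2): the paper dispatches it by citing Theorem~2.6 of \cite{majid4}, which gives the implication without the compactness of $\mathsf{U}$ or the continuity of $\mathbf{d}$, whereas you derive it by pushing the Lyapunov function through the same doubling correspondence and invoking the (1)$\Rightarrow$(2) half of Theorem~\ref{theorem1}. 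Your route is self-contained and correct under the stated hypotheses, at the cost of carrying assumptions (compactness, continuity) that the cited result shows are unnecessary for that direction; the paper's citation buys the stronger observation that (1)$\Rightarrow$(2) holds unconditionally, at the cost of depending on an external theorem. Both are sound.
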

\begin{proof}
The proof from (1) to (2) has been showed in Theorem 2.6 in \cite{majid4}, even in the absence of the compactness and continuity assumptions on $\mathsf{U}$ and $\mathbf{d}$, respectively. We now prove that (2) implies (1). Since $\Sigma$ is $\delta_\exists$-GAS, using Lemma \ref{lemma1}, we conclude that the control system $\widehat\Sigma$, defined in Lemma \ref{lemma1}, is U$_\exists$GAS with respect to the diagonal set $\Delta$. Since $\psi(x)=\mathbf{d}(x,y)$ is continuous for any $y\in\R^n$, it can be easily verified that $\widehat\psi(z)=\widehat{\mathbf{d}}(z,z')$ is also continuous for any $z'\in\R^{2n}$, where the metric $\widehat{\mathbf{d}}$ was defined in Lemma \ref{lemma1}. Using Theorem \ref{theorem1}, we conclude that there exists a U$_\exists$GAS Lyapunov function $V:\R^{2n}\rightarrow\R_0^+$, with respect to $\Delta$, for $\widehat\Sigma$.
Thanks to the special form of $\widehat\Sigma$ and using the equality (\ref{equality}), the function $V$ satisfies: 
\begin{itemize}
\item[(i)] $\underline{\alpha}(\mathbf{d}(x,x'))\leq{V}(x,x')\leq\overline{\alpha}(\mathbf{d}(x,x'))$;
\item[(ii)] $\frac{\partial{V}}{\partial{x}}f(x,u)+\frac{\partial{V}}{\partial{x'}}f(x',u)\leq -\kappa V(x,x')$,
\end{itemize}
for some $\mathcal{K}_\infty$ functions $\underline\alpha,\overline\alpha$ and some $\kappa\in\R^+$. Hence, V is a $\delta_\exists$-GAS Lyapunov function for $\Sigma$.
\end{proof}

Before providing characterization of $\delta_\exists$-ISS in terms of existence of a $\delta_\exists$-ISS Lyapunov function, we need the following technical lemma, inspired by Proposition 5.3 in \cite{angeli}. To state the following results, we need to define the function:
\begin{equation}\label{function}
\mathsf{sat}_\mathsf{U}(u)=\left\{
                \begin{array}{cc}
                 u&\mbox{if}~u\in\mathsf{U},\\ 
                 \arg\min_{u'\in\mathsf{U}}\left\Vert u'-u\right\Vert&\mbox{if}~u\notin\mathsf{U}.
                \end{array}
                \right.
\end{equation} 
As explained in \cite{angeli}, by assuming $\mathsf{U}$ is closed and convex and since $\Vert\cdot\Vert:\R^m\rightarrow\R^+_0$ is a proper, convex function, the definition (\ref{function}) is well-defined and the minimizer of $\left\Vert u'-u\right\Vert$ with $u'\in\mathsf{U}$ is unique. Moreover, by convexity of $\mathsf{U}$ we have:
\begin{equation}\label{convex}
\Vert\mathsf{sat}_\mathsf{U}(u_1)-\mathsf{sat}_\mathsf{U}(u_2)\Vert\leq\Vert u_1-u_2\Vert,~~~~\forall u_1,u_2\in\R^m.
\end{equation}

\begin{lemma}\label{lemma2}
Consider a control system $\Sigma=(\R^n,\mathsf{U},\mathcal{U},f)$, where $\mathsf{U}$ is closed and convex. If $\Sigma$ is $\delta_\exists$-ISS, equipped with a metric $\mathbf{d}$ such that $\psi(x)=\mathbf{d}(x,y)$ is continuous for any $y\in\R^n$, then there exists a $\mathcal{K}_\infty$ function $\rho$ such that the control system $\widehat\Sigma=(\R^{2n},\mathsf{D},\mathcal{D},\widehat{f})$\footnote{$\mathcal{D}$ is the set of all measurable, locally essentially bounded functions of time from intervals of the form \mbox{$]a,b[\subseteq\mathbb{R}$} to $\mathsf{D}$ with $a<0$ and $b>0$.} is U$_\exists$GAS with respect to the diagonal set $\Delta$, where: 
\begin{equation}
\widehat{f}(\zeta,\omega)=\left[
                \begin{array}{c}
                 f(\xi_1,\mathsf{sat}_\mathsf{U}(\omega_1+\rho(\mathbf{d}(\xi_1,\xi_2))\omega_2))\\ 
                 f(\xi_2,\mathsf{sat}_\mathsf{U}(\omega_1-\rho(\mathbf{d}(\xi_1,\xi_2))\omega_2))
                \end{array}
                \right],
\end{equation}
$\zeta=\left[\xi_1^T,\xi_2^T\right]^T$, $\mathsf{D}=\mathsf{U}\times\mathcal{B}_1(0)$, and $\omega=\left[\omega_1^T,\omega_2^T\right]^T$.
\end{lemma}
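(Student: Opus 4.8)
The plan is to adapt the construction behind Proposition~5.3 of~\cite{angeli}. First I would fix the gain of the auxiliary system: letting $\gamma\in\mathcal{K}_\infty$ be the function in the $\delta_\exists$-ISS estimate~(\ref{delta_ISS}), I take $\rho(r)=\tfrac12\gamma^{-1}(r/2)$, so that $\rho\in\mathcal{K}_\infty$ and $\gamma(2\rho(r))=r/2$ for every $r\ge0$; the factor $\tfrac12<1$ is what will make the small-gain step below work. Then, given an initial condition $z=[x_1^T,x_2^T]^T$ and an input $\omega=[\omega_1^T,\omega_2^T]^T\in\mathcal{D}$ (so $\omega_1$ is locally essentially bounded with values in $\mathsf{U}$ and $\|\omega_2(s)\|\le1$), I would pick a trajectory $\zeta=[\xi_1^T,\xi_2^T]^T$ of $\widehat\Sigma$ and note that, with $\upsilon_i(s):=\mathsf{sat}_\mathsf{U}\!\big(\omega_1(s)+(-1)^{i+1}\rho(\mathbf{d}(\xi_1(s),\xi_2(s)))\,\omega_2(s)\big)$ for $i=1,2$, the curves $\xi_1=\xi_{x_1\upsilon_1}$ and $\xi_2=\xi_{x_2\upsilon_2}$ are trajectories of $\Sigma$. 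Verifying $\upsilon_1,\upsilon_2\in\mathcal{U}$ is routine: $\mathbf{d}$ is jointly continuous (from the triangle inequality together with the hypothesis that $\psi(\cdot)=\mathbf{d}(\cdot,y)$ is continuous), $\rho$ is continuous, and $\mathsf{sat}_\mathsf{U}$ is $1$-Lipschitz, so $\upsilon_1,\upsilon_2$ are measurable, locally essentially bounded, and $\mathsf{U}$-valued.

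Next, writing $\mu(s):=\mathbf{d}(\xi_1(s),\xi_2(s))$, which equals $\widehat{\mathbf{d}}(\zeta(s),\Delta)$ by~(\ref{equality}) applied to the metric $\widehat{\mathbf{d}}$ of~(\ref{metric}), inequality~(\ref{convex}) and $\|\omega_2(s)\|\le1$ give $\|\upsilon_1(s)-\upsilon_2(s)\|\le2\rho(\mu(s))$. Plugging this into~(\ref{delta_ISS}) and using causality and time-invariance of $\Sigma$, one obtains, for all $0\le t_0\le t$ and $M(t_0,t):=\sup_{s\in[t_0,t]}\mu(s)$,
\[ \mu(t)\ \le\ \beta\big(\mu(t_0),\,t-t_0\big)+\gamma\big(2\rho(M(t_0,t))\big)\ =\ \beta\big(\mu(t_0),\,t-t_0\big)+\tfrac12\,M(t_0,t), \]
using that $\mu$ is continuous while $2\rho,\gamma$ are increasing. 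Taking $t_0=0$ and maximizing the left-hand side over $[0,t]$ yields $M(0,t)\le2\beta(\mu(0),0)$, so $\mu$ is bounded uniformly in $t$ by a $\mathcal{K}_\infty$ function of $\mu(0)$; in particular no trajectory of $\widehat\Sigma$ can escape in finite time, so (using forward completeness of $\Sigma$) $\widehat\Sigma$ is forward complete, and uniqueness of trajectories of $\widehat\Sigma$ is never needed.

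From the displayed inequality with contraction factor $\tfrac12<1$, together with this uniform bound, a standard nonlinear small-gain argument produces a single $\hat\beta\in\mathcal{KL}$, depending only on $\beta$ and $\gamma$, with $\mu(t)\le\hat\beta(\mu(0),t)$ for all $t\ge0$: iterating over time windows whose lengths depend only on $\mu(0)$, one shows that the running supremum of $\mu$ is reduced by a fixed factor on each window, and relabeling produces the $\mathcal{KL}$ bound. Since by~(\ref{equality}) this reads $\widehat{\mathbf{d}}(\zeta_{z\omega}(t),\Delta)\le\hat\beta(\widehat{\mathbf{d}}(z,\Delta),t)$ for every $z$, every $\omega\in\mathcal{D}$ and every $t\ge0$, and $\hat\beta$ does not depend on $z$ or $\omega$, this is exactly the assertion that $\widehat\Sigma$, equipped with the metric $\widehat{\mathbf{d}}$, is U$_\exists$GAS with respect to $\Delta$.

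The only genuinely nontrivial step is the last one: turning the self-referential estimate $\mu(t)\le\beta(\mu(t_0),t-t_0)+\tfrac12 M(t_0,t)$ into a bona fide $\mathcal{KL}$ bound whose decay rate is independent of the auxiliary input $\omega$. Everything else --- the measurability and local essential boundedness of $\upsilon_1,\upsilon_2$, the joint continuity of $\mathbf{d}$, the non-expansiveness of $\mathsf{sat}_\mathsf{U}$, and the identity $\widehat{\mathbf{d}}(\cdot,\Delta)=\mathbf{d}(\xi_1,\xi_2)$ --- is routine verification.
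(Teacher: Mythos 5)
Your plan is sound and reaches the right conclusion, but it departs from the paper's proof in how the contraction is established. The paper first passes from the additive estimate~(\ref{delta_ISS}) to the max form $\mathbf{d}(\xi_{x\upsilon}(t),\xi_{x'\upsilon'}(t))\leq\max\{\beta(\mathbf{d}(x,x'),t),\gamma(\Vert\upsilon-\upsilon'\Vert_\infty)\}$, chooses $\rho$ more conservatively (namely $\rho(r)\leq\tfrac12\gamma^{-1}\!\left(\alpha^{-1}(r)/4\right)$ with $\alpha(\cdot)=\beta(\cdot,0)$), and proves via a stopping-time contradiction (introducing $t_1=\inf\{t>0 : \gamma(2\rho(\widehat{\mathbf{d}}(\zeta_{z\omega}(t),\Delta)))>\widehat{\mathbf{d}}(z,\Delta)/2\}$ and showing $t_1=\infty$) that $\gamma\!\left(2\rho(\widehat{\mathbf{d}}(\zeta_{z\omega}(t),\Delta))\right)\leq\widehat{\mathbf{d}}(z,\Delta)/2$ for all $t\geq 0$; feeding this back into the max-form estimate gives $\widehat{\mathbf{d}}(\zeta_{z\omega}(t),\Delta)\leq\max\{\beta(\widehat{\mathbf{d}}(z,\Delta),t),\widehat{\mathbf{d}}(z,\Delta)/2\}$. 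You instead stay in the additive form, pick $\rho(r)=\tfrac12\gamma^{-1}(r/2)$ so that $\gamma\circ 2\rho$ is exactly multiplication by $\tfrac12$, and obtain the running-supremum estimate $\mu(t)\leq\beta(\mu(t_0),t-t_0)+\tfrac12 M(t_0,t)$ by causality plus the nonexpansiveness bound $\Vert\upsilon_1-\upsilon_2\Vert\leq 2\rho(\mu)$; the uniform bound $M(0,t)\leq 2\beta(\mu(0),0)$ then drops out algebraically without any contradiction argument or extra composition with $\alpha^{-1}$. The two proofs reconverge at the end: the windowed geometric iteration that you call a ``standard small-gain argument'' is exactly what the paper performs explicitly (the $T_{r_1}+T_{r_2}+\cdots+T_{r_k}$ construction, followed by an appeal to the Lin--Sontag--Wang converse result to assemble a single $\mathcal{KL}$ function from uniform global stability plus uniform global attraction). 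So your front end is leaner and avoids the stopping time, while the paper's front end is chosen so as to mirror Angeli's Proposition~5.3 essentially verbatim; both are correct, though if you were writing this up for publication you would want to spell out the final small-gain iteration rather than gesture at it.
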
 

\begin{proof}
The proof was inspired by the proof of Proposition 5.3 in \cite{angeli}. Since $\Sigma$ is $\delta_\exists$-ISS, equipped with the metric $\mathbf{d}$, there exists some $\mathcal{KL}$ function $\beta$ and $\mathcal{K}_\infty$ function $\gamma$ such that:
\begin{equation}\label{delta-ISS}
\mathbf{d}(\xi_{x\upsilon}(t),\xi_{x'\upsilon'}(t))\leq\max\{\beta(\mathbf{d}(x,x'),t),\gamma(\Vert\upsilon-\upsilon'\Vert_\infty)\}.
\end{equation} 
Note that inequality (\ref{delta-ISS}) is a straightforward consequence of inequality (\ref{delta_ISS}) (see Remark 2.5 in \cite{sontag3}).
Using Lemma \ref{lemma1} and the proposed metric $\widehat{\mathbf{d}}$ in (\ref{metric}), we have: $\mathbf{d}(x,x')=\widehat{\mathbf{d}}(z,\Delta)$, where $z=\left[x^T,x'^T\right]^T$. Without loss of generality we can assume $\alpha(r)=\beta(r,0)>r$ for any $r\in\R^+$. Let $\rho$ be a $\mathcal{K}_\infty$ function satisfying $\rho(r)\leq\frac{1}{2}\gamma^{-1}\circ\left(\alpha^{-1}(r)/4\right)$. Now we show that 
\begin{equation}\label{inequality0}
\gamma\left(\left\Vert2\omega_2(t)\rho\left(\widehat{\mathbf{d}}(\zeta_{z\omega}(t),\Delta)\right)\right\Vert\right)\leq\widehat{\mathbf{d}}(z,\Delta)/2,
\end{equation}
for any $t\in\R_0^+$, any $z\in\R^{2n}$, and any $\omega\in\mathcal{D}$. Since $\gamma$ is a $\mathcal{K}_\infty$ function and $\omega_2(t)\in\mathcal{B}_1(0)$, it is enough to show 
\begin{equation}\label{inequality1}
\gamma\left(2\rho\left(\widehat{\mathbf{d}}(\zeta_{z\omega}(t),\Delta)\right)\right)\leq\widehat{\mathbf{d}}(z,\Delta)/2.
\end{equation}
Since 
\begin{equation}
\gamma\left(2\rho\left(\widehat{\mathbf{d}}(\zeta_{z\omega}(0),\Delta)\right)\right)=\gamma\left(2\rho\left(\widehat{\mathbf{d}}(z,\Delta)\right)\right)\leq\alpha^{-1}\left(\widehat{\mathbf{d}}(z,\Delta)\right)/4\leq\widehat{\mathbf{d}}(z,\Delta)/4,
\end{equation}
and $\varphi(z)=\widehat{\mathbf{d}}(z,\Delta)$ is a continuous function (see proof of Theorem \ref{theorem1}), then for all $t\in\R_0^+$ small enough, we have $\gamma\left(2\rho\left(\widehat{\mathbf{d}}(\zeta_{z\omega}(t),\Delta)\right)\right)\leq\widehat{\mathbf{d}}(z,\Delta)/4$. Now, let 
\begin{equation}
t_1=\inf\left\{t>0\,\,|\,\,\gamma\left(2\rho\left(\widehat{\mathbf{d}}(\zeta_{z\omega}(t),\Delta)\right)\right)>\widehat{\mathbf{d}}(z,\Delta)/2\right\}.
\end{equation}
Clearly $t_1>0$. We will show that $t_1=\infty$. Now, assume by contradiction that $t_1<\infty$. Therefore, the inequality (\ref{inequality1}) holds for all $t\in[0,t_1)$. Hence, for almost all $t\in[0,t_1)$, one obtains:
\begin{equation}\label{inequality2}
\gamma\left(\left\Vert2\omega_2(t)\rho\left(\widehat{\mathbf{d}}(\zeta_{z\omega}(t),\Delta)\right)\right\Vert\right)\leq\gamma\left(2\rho\left(\widehat{\mathbf{d}}(\zeta_{z\omega}(t),\Delta)\right)\right)\leq\widehat{\mathbf{d}}(z,\Delta)/2<\alpha\left(\widehat{\mathbf{d}}(z,\Delta)\right)/2.
\end{equation} 
Let $\upsilon$ and $\upsilon'$ be defined as:
\begin{eqnarray}
\nonumber
\upsilon(t)&=&\mathsf{sat}_\mathsf{U}\left(\omega_1(t)+\rho\left(\widehat{\mathbf{d}}(\zeta_{z\omega}(t),\Delta)\right)\omega_2(t)\right),\\\notag
\upsilon'(t)&=&\mathsf{sat}_\mathsf{U}\left(\omega_1(t)-\rho\left(\widehat{\mathbf{d}}(\zeta_{z\omega}(t),\Delta)\right)\omega_2(t)\right).
\end{eqnarray}
By using (\ref{convex}), we obtain: $\Vert \upsilon(t)-\upsilon'(t)\Vert\leq\left\Vert2\omega_2(t)\rho\left(\widehat{\mathbf{d}}(\zeta_{z\omega}(t),\Delta)\right)\right\Vert$. Using (\ref{delta-ISS}) and (\ref{inequality2}), we have: 
\begin{equation}
\widehat{\mathbf{d}}(\zeta_{z\omega}(t),\Delta)=\mathbf{d}\left(\xi_{x\upsilon}(t),\xi_{x'\upsilon'}(t)\right)\leq\beta\left(\mathbf{d}(x,x'),0\right)=\beta\left(\widehat{\mathbf{d}}(z,\Delta),0\right)=\alpha\left(\widehat{\mathbf{d}}(z,\Delta)\right), 
\end{equation}
for any $t\in[0,t_1]$ and any $z=\left[x^T,x'^T\right]^T\in\R^{2n}$ which implies that $\gamma\left(2\rho\left(\widehat{\mathbf{d}}(\zeta_{z\omega}(t),\Delta)\right)\right)\leq\widehat{\mathbf{d}}(z,\Delta)/4$, contradicting the definition of $t_1$. Therefore, $t_1=\infty$ and inequality (\ref{inequality0}) is proved for all $t\in\R_0^+$. Therefore, using (\ref{delta-ISS}) and (\ref{inequality0}), we obtain:
\begin{eqnarray}
\widehat{\mathbf{d}}(\zeta_{z\omega}(t),\Delta)=\mathbf{d}\left(\xi_{x\upsilon}(t),\xi_{x'\upsilon'}(t)\right)&\leq&\max\left\{\beta\left(\mathbf{d}(x,x'),t\right),\gamma\left(\Vert\upsilon-\upsilon'\Vert_\infty\right)\right\}\\\notag&\leq&\max\left\{\beta\left(\mathbf{d}(x,x'),t\right),\gamma\left(\left\Vert2\omega_2\rho\left(\widehat{\mathbf{d}}(\zeta_{z\omega},\Delta)\right)\right\Vert_\infty\right)\right\}\\\notag&\leq&\max\left\{\beta\left(\widehat{\mathbf{d}}(z,\Delta),t\right),\widehat{\mathbf{d}}(z,\Delta)/2\right\},
\end{eqnarray}
for any $z=\left[x^T,x'^T\right]^T\in\R^{2n}$, any $\omega\in\mathcal{D}$, and any $t\in\R_0^+$. Since $\beta$ is a $\mathcal{KL}$ function, it can be readily seen that for each $r>0$ if $\widehat{\mathbf{d}}(z,\Delta)\leq r$, then there exists some $T_r\geq0$ such that for any $t\geq T_r$, $\beta\left(\widehat{\mathbf{d}}(z,\Delta),t\right)\leq r/2$ and, hence, $\widehat{\mathbf{d}}(\zeta_{z\omega}(t),\Delta)\leq r/2$. For any $\varepsilon\in\R^+$, let $k$ be a positive integer such that $2^{-k}r<\varepsilon$. Let $r_1=r$ and $r_i=r_{i-1}/2$ for $i\geq2$, and let $\tau=T_{r1}+T_{r2}+\cdots+T_{rk}$. Then, for $t\geq\tau$, we have $\widehat{\mathbf{d}}(\zeta_{z\omega}(t),\Delta)\leq2^{-k}r<\varepsilon$ for all $\widehat{\mathbf{d}}(z,\Delta)\leq r$, all $\omega\in\mathcal{D}$, and all $t\geq\tau$. Therefore, it can be concluded that the set $\Delta$ is a uniform global attractor for the control system $\widehat\Sigma$. Furthermore, since $\widehat{\mathbf{d}}(\zeta_{z\omega}(t),\Delta)\leq\beta\left(\widehat{\mathbf{d}}(z,\Delta),0\right)$ for all $t\in\R_0^+$, all $z\in\R^{2n}$, and all $\omega\in\mathcal{D}$, the control system $\widehat\Sigma$ is uniformly globally stable and as showed in \cite{teel}, it is U$_\exists$GAS.
\end{proof}

The next theorem provide characterization of $\delta_\exists$-ISS in terms of existence of a $\delta_\exists$-ISS Lyapunov function.
\begin{theorem}
Consider a control system $\Sigma$. If $\mathsf{U}$ is compact and convex and $\mathbf{d}$ is a metric such that the function $\psi(x)=\mathbf{d}(x,y)$ is continuous\footnote{Here, continuity is understood with respect to the Euclidean metic.} for any $y\in\R^n$ then the following statements are equivalent: 
\begin{itemize}
\item[(1)] $\Sigma$ is forward complete and there exists a $\delta_\exists$-ISS Lyapunov function, equipped with metric ${\mathbf{d}}$.
\item[(2)] $\Sigma$ is $\delta_\exists$-ISS, equipped with metric $\mathbf{d}$.
\end{itemize}
\end{theorem}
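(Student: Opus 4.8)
The plan is to prove the two implications separately, following the pattern already used for $\delta_\exists$-GAS, but replacing the auxiliary system of Lemma~\ref{lemma1} by the input-comparison system $\widehat\Sigma$ of Lemma~\ref{lemma2}.

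For the direction $(1)\Rightarrow(2)$ I would argue directly, without using the compactness or continuity hypotheses, exactly as in the $\delta_\exists$-GAS case (whose $(1)\Rightarrow(2)$ part is Theorem~2.6 in~\cite{majid4}). Given a $\delta_\exists$-ISS Lyapunov function $V$ with associated $\underline{\alpha},\overline{\alpha},\sigma\in\mathcal{K}_\infty$, $\kappa\in\R^+$ and metric $\mathbf{d}$, differentiate $t\mapsto V(\xi_{x\upsilon}(t),\xi_{x'\upsilon'}(t))$ along two trajectories: condition~(iii) of Definition~\ref{delta_GAS_Lya} gives $\frac{d}{dt}V\le-\kappa V+\sigma(\Vert\upsilon-\upsilon'\Vert_\infty)$ for almost all $t$, so the comparison lemma yields $V(\xi_{x\upsilon}(t),\xi_{x'\upsilon'}(t))\le e^{-\kappa t}V(x,x')+\frac1\kappa\sigma(\Vert\upsilon-\upsilon'\Vert_\infty)$. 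Combining this with condition~(i) and the elementary bound $\underline{\alpha}^{-1}(a+b)\le\underline{\alpha}^{-1}(2a)+\underline{\alpha}^{-1}(2b)$ produces the estimate~(\ref{delta_ISS}) with $\beta(r,t)=\underline{\alpha}^{-1}(2e^{-\kappa t}\overline{\alpha}(r))\in\mathcal{KL}$ and $\gamma(s)=\underline{\alpha}^{-1}(\frac2\kappa\sigma(s))\in\mathcal{K}_\infty$; forward completeness is part of~(1).

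For the direction $(2)\Rightarrow(1)$, I would first invoke Lemma~\ref{lemma2} to get a $\mathcal{K}_\infty$ function $\rho$ for which $\widehat\Sigma=(\R^{2n},\mathsf{D},\mathcal{D},\widehat f)$, with $\mathsf{D}=\mathsf{U}\times\mathcal{B}_1(0)$, is U$_\exists$GAS with respect to $\Delta$ and equipped with the metric $\widehat{\mathbf{d}}$ of~(\ref{metric}). Since $\mathsf{U}$ is compact, $\mathsf{D}$ is compact, and $z\mapsto\widehat{\mathbf{d}}(z,z')$ is continuous because $\psi$ is (as in the proof of the $\delta_\exists$-GAS theorem), so Theorem~\ref{theorem1} applies and delivers a smooth U$_\exists$GAS Lyapunov function $V:\R^{2n}\to\R_0^+$ with respect to $\Delta$ for $\widehat\Sigma$. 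Writing $V(z)=V(x_1,x_2)$ for $z=[x_1^T,x_2^T]^T$ and using the identity $\widehat{\mathbf{d}}(z,\Delta)=\mathbf{d}(x_1,x_2)$ from~(\ref{equality}), condition~(i) of Definition~\ref{delta_GAS_Lya} holds for $V$, and its decrease condition becomes
\[
\frac{\partial V}{\partial x_1}f\!\left(x_1,\mathsf{sat}_\mathsf{U}\!\left(\omega_1+\rho(\mathbf{d}(x_1,x_2))\,\omega_2\right)\right)+\frac{\partial V}{\partial x_2}f\!\left(x_2,\mathsf{sat}_\mathsf{U}\!\left(\omega_1-\rho(\mathbf{d}(x_1,x_2))\,\omega_2\right)\right)\le-\kappa V(x_1,x_2)
\]
for all $\omega_1\in\mathsf{U}$ and $\omega_2\in\mathcal{B}_1(0)$.

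The remaining and most substantial step is to deduce condition~(iii). Fix $x,x'\in\R^n$, $u,u'\in\mathsf{U}$, and put $r=\Vert u-u'\Vert$. If $r\le2\rho(\mathbf{d}(x,x'))$, choose $\omega_1=\frac12(u+u')$, which lies in $\mathsf{U}$ by convexity, and $\omega_2=\frac{1}{2\rho(\mathbf{d}(x,x'))}(u-u')\in\mathcal{B}_1(0)$; then $\omega_1\pm\rho(\mathbf{d}(x,x'))\omega_2\in\{u,u'\}\subseteq\mathsf{U}$, the saturations are inactive, and the displayed inequality already yields~(iii) (with vanishing $\sigma$-term). If $r>2\rho(\mathbf{d}(x,x'))$, take $\omega_1=\frac12(u+u')$ and $\omega_2=\frac1r(u-u')$; then $\omega_1\pm\rho(\mathbf{d}(x,x'))\omega_2$ are still convex combinations of $u,u'$, so the realized input pair $(u_1,u_2)$ lies in $\mathsf{U}\times\mathsf{U}$ with $\Vert u-u_1\Vert=\Vert u'-u_2\Vert=\frac r2-\rho(\mathbf{d}(x,x'))<\frac r2$, and the displayed inequality gives
\[
\frac{\partial V}{\partial x}f(x,u)+\frac{\partial V}{\partial x'}f(x',u')\le-\kappa V(x,x')+\frac{\partial V}{\partial x}\big(f(x,u)-f(x,u_1)\big)+\frac{\partial V}{\partial x'}\big(f(x',u')-f(x',u_2)\big).
\]
It then remains to bound the last two terms by $\sigma(r)$ for a suitable $\sigma\in\mathcal{K}_\infty$, and this is the main obstacle: in this regime $\mathbf{d}(x,x')<\rho^{-1}(r/2)$ confines $(x,x')$ to a tube around the diagonal, but the tube is unbounded in $\R^{2n}$, so producing a modulus $\sigma$ that works uniformly over it requires combining the compactness of $\mathsf{U}$ with additional structure of $V$ (and very likely a preliminary reshaping of $V$, in the spirit of Lemma~11 in~\cite{praly} and the constructions of~\cite{teel})---this is precisely the delicate point behind Proposition~5.3 in~\cite{angeli}. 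Once~(iii) is in place, $V$ is a $\delta_\exists$-ISS Lyapunov function for $\Sigma$, and forward completeness follows from $\delta_\exists$-ISS, which completes $(2)\Rightarrow(1)$.
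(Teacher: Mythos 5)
The direction $(1)\Rightarrow(2)$ is fine and matches the paper's intent (which simply cites Theorem~2.6 of~\cite{majid4}); your comparison-lemma argument is a standard and correct way to obtain estimate~(\ref{delta_ISS}).

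For $(2)\Rightarrow(1)$ you follow the paper's route exactly up to obtaining the U$_\exists$GAS Lyapunov function $V$ for $\widehat\Sigma$ and writing out its decrease condition, and your choice of $\omega_1=(u+u')/2$, $\omega_2=(u-u')/(2\rho(\mathbf{d}(x,x')))$ in the regime $\Vert u-u'\Vert\le 2\rho(\mathbf{d}(x,x'))$ is precisely what the paper does. The gap is exactly where you flag it: you then try to treat the complementary regime $\Vert u-u'\Vert>2\rho(\mathbf{d}(x,x'))$ directly, by substituting perturbed inputs $u_1,u_2$ and bounding $\frac{\partial V}{\partial x}(f(x,u)-f(x,u_1))+\frac{\partial V}{\partial x'}(f(x',u')-f(x',u_2))$ by $\sigma(\Vert u-u'\Vert)$, and you correctly observe that such a uniform bound over the unbounded tube $\mathbf{d}(x,x')<\rho^{-1}(r/2)$ is not available from the information you have. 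The paper never attempts this: it stops after the first regime, recording only the implication-form estimate~(\ref{cond2}), namely
\[
2\rho(\mathbf{d}(x,x'))\ge\Vert u-u'\Vert
\ \Longrightarrow\
\frac{\partial V}{\partial x}f(x,u)+\frac{\partial V}{\partial x'}f(x',u')\le-\kappa V(x,x'),
\]
and then invokes the standard equivalence between implication-form and dissipation-form ISS Lyapunov functions (Remark~2.4 in~\cite{sontag3}) to convert this into condition~(iii) with a suitable $\sigma\in\mathcal{K}_\infty$ and a possibly different $\widehat\kappa$. That conversion is an off-the-shelf result and is where the ``reshaping of $V$'' you anticipate is hidden; once you replace your second-case computation by the appeal to that equivalence, your argument closes and coincides with the paper's.
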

\begin{proof}
The proof from (1) to (2) has been showed in Theorem 2.6 in \cite{majid4}, even in the absence of the compactness and convexity assumptions on $\mathsf{U}$ and the continuity assumption on $\mathbf{d}$. We now prove that (2) implies (1). As we proved in Lemma \ref{lemma2}, since $\Sigma$ is $\delta_\exists$-ISS, it implies that the control system $\widehat\Sigma$, defined in Lemma \ref{lemma2}, is U$_\exists$GAS. Since $\psi(x)=\mathbf{d}(x,y)$ is continuous for any $y\in\R^n$, it can be easily verified that $\widehat\psi(z)=\widehat{\mathbf{d}}(z,z')$ is continuous for any $z'\in\R^{2n}$, where the metric $\widehat{\mathbf{d}}$ was defined in Lemma \ref{lemma1}. Using Theorem \ref{theorem1}, we conclude that there exists a U$_\exists$GAS Lyapunov function V, with respect to $\Delta$, for $\widehat\Sigma$. Thanks to the special form of $\widehat\Sigma$ and using the equality (\ref{equality}), the function $V$ satisfies: 
\begin{equation}
\underline\alpha(\mathbf{d}(x,x'))\leq V(x,x')\leq\overline\alpha(\mathbf{d}(x,x')),
\end{equation}
for some $\mathcal{K}_\infty$ functions $\underline\alpha,\overline\alpha$, any $x,x'\in\R^n$, and 
\begin{equation}\label{deriv}
\frac{\partial{V}}{\partial{x}}f(x,\mathsf{sat}_\mathsf{U}(d_1+\rho(\mathbf{d}(x,x')))d_2)+\frac{\partial{V}}{\partial{x'}}f(x',\mathsf{sat}_\mathsf{U}(d_1-\rho(\mathbf{d}(x,x'))d_2))\leq-\kappa V(x,x'),
\end{equation}
for some $\kappa\in\R^+$ and any $\left[d_1^T,d_2^T\right]^T\in\mathsf{D}$. By choosing $d_1=(u+u')/2$ and $d_2=(u-u')/(2\rho(\mathbf{d}(x,x')))$ for any $u,u'\in\mathsf{U}$, it can be readily checked that $\left[d_1^T,d_2^T\right]^T\in\mathsf{U}\times\mathcal{B}_1(0)$, whenever $2\rho(\mathbf{d}(x,x'))\geq\Vert u-u'\Vert$. Hence, using (\ref{deriv}), we have :
\begin{equation}\label{cond2}
\varphi(\mathbf{d}(x,x'))\geq\Vert u-u'\Vert\Rightarrow\frac{\partial{V}}{\partial{x}}f(x,u)+\frac{\partial{V}}{\partial{x'}}f(x',u')\leq-\kappa V(x,x'),
\end{equation}
where $\varphi(r)=2\rho(r)$. As showed in Remark 2.4 in \cite{sontag3}, there is no loss of generality in modifying inequality (\ref{cond2}) to
\begin{equation}
\frac{\partial{V}}{\partial{x}}f(x,u)+\frac{\partial{V}}{\partial{x'}}f(x',u')\leq-\widehat\kappa V(x,x')+\gamma(\Vert u-u'\Vert),
\end{equation}
for some $\mathcal{K}_\infty$ function $\gamma$ and some $\widehat\kappa\in\R^+$, which completes the proof.
\end{proof}


\bibliographystyle{alpha}
\bibliography{reference}
\end{document}